\DeclareMathOperator{\Lie}{Lie}
\DeclareMathOperator{\Char}{char}
\begin{document}

\newcounter{rownum}
\setcounter{rownum}{0}
\newcommand{\ab}{\addtocounter{rownum}{1}\arabic{rownum}}

\newcommand{\x}{$\times$}
\newcommand{\bb}{\mathbf}

\newcommand{\Ind}{\mathrm{Ind}}
\newcommand{\R}{\mathrm{R}}
\newcommand{\RR}{\mathscr{R}}
\newcommand{\G}{\mathscr{G}}
\newcommand{\hra}{\hookrightarrow}
\newcommand{\sss}{\mathrm{ss}}
\newtheorem{lemma}{Lemma}[section]
\newtheorem{theorem}[lemma]{Theorem}
\newtheorem*{theorem*}{Theorem}
\newtheorem*{TA}{Main Theorem}
\newtheorem*{TB}{Theorem B}
\newtheorem*{TC}{Theorem C}
\newtheorem*{CorC}{Corollary C}
\newtheorem*{TD}{Theorem D}
\newtheorem*{TE}{Theorem E}
\newtheorem*{PF}{Proposition E}
\newtheorem*{C3}{Corollary 3}
\newtheorem*{T4}{Theorem 4}
\newtheorem*{C5}{Corollary 5}
\newtheorem*{C6}{Corollary 6}
\newtheorem*{C7}{Corollary 7}
\newtheorem*{C8}{Corollary 8}
\newtheorem*{claim}{Claim}
\newtheorem{cor}[lemma]{Corollary}
\newtheorem{conjecture}[lemma]{Conjecture}
\newtheorem{prop}[lemma]{Proposition}
\newtheorem{question}[lemma]{Question}
\theoremstyle{definition}
\newtheorem{example}[lemma]{Example}
\newtheorem{examples}[lemma]{Examples}
\newtheorem{algorithm}[lemma]{Algorithm}
\newtheorem*{algorithm*}{Algorithm}
\theoremstyle{remark}
\newtheorem{remark}[lemma]{Remark}
\newtheorem{remarks}[lemma]{Remarks}
\newtheorem{obs}[lemma]{Observation}
\theoremstyle{definition}
\newtheorem{defn}[lemma]{Definition}

  \def\hal{\unskip\nobreak\hfil\penalty50\hskip10pt\hbox{}\nobreak
  \hfill\vrule height 5pt width 6pt depth 1pt\par\vskip 2mm}

\renewcommand{\labelenumi}{(\roman{enumi})}
\newcommand{\Hom}{\mathrm{Hom}}
\newcommand{\Int}{\mathrm{int}}
\newcommand{\Ext}{\mathrm{Ext}}
\newcommand{\opH}{\mathrm{H}}
\newcommand{\D}{\mathscr{D}}
\newcommand{\soc}{\mathrm{Soc}}
\newcommand{\SO}{\mathrm{SO}}
\renewcommand{\O}{\mathrm{O}}
\newcommand{\Sp}{\mathrm{Sp}}
\newcommand{\SL}{\mathrm{SL}}
\newcommand{\GL}{\mathrm{GL}}
\newcommand{\PGL}{\mathrm{PGL}}
\newcommand{\OO}{\mathcal{O}}
\newcommand{\Y}{\mathbf{Y}}
\newcommand{\X}{\mathbf{X}}
\newcommand{\diag}{\mathrm{diag}}
\newcommand{\End}{\mathrm{End}}
\newcommand{\tr}{\mathrm{tr}}
\newcommand{\Stab}{\mathrm{Stab}}
\newcommand{\red}{\mathrm{red}}
\newcommand{\Aut}{\mathrm{Aut}}
\renewcommand{\H}{\mathcal{H}}
\renewcommand{\u}{\mathfrak{u}}
\newcommand{\Ad}{\mathrm{Ad}}
\newcommand{\N}{\mathcal{N}}
\newcommand{\Z}{\mathbb{Z}}
\newcommand{\la}{\langle}\newcommand{\ra}{\rangle}
\newcommand{\gl}{\mathfrak{gl}}
\newcommand{\g}{\mathfrak{g}}
\newcommand{\F}{\mathbb{F}}
\newcommand{\m}{\mathfrak{m}}
\renewcommand{\b}{\mathfrak{b}}
\newcommand{\p}{\mathfrak{p}}
\newcommand{\q}{\mathfrak{q}}
\renewcommand{\l}{\mathfrak{l}}
\newcommand{\del}{\partial}
\newcommand{\h}{\mathfrak{h}}
\renewcommand{\t}{\mathfrak{t}}
\renewcommand{\k}{\mathfrak{k}}
\newcommand{\Gm}{\mathbb{G}_m}
\renewcommand{\c}{\mathfrak{c}}
\renewcommand{\r}{\mathfrak{r}}
\newcommand{\n}{\mathfrak{n}}
\newcommand{\s}{\mathfrak{s}}
\newcommand{\Q}{\mathbb{Q}}
\newcommand{\z}{\mathfrak{z}}
\newcommand{\pso}{\mathfrak{pso}}
\newcommand{\so}{\mathfrak{so}}
\renewcommand{\sl}{\mathfrak{sl}}
\newcommand{\psl}{\mathfrak{psl}}
\renewcommand{\sp}{\mathfrak{sp}}
\newcommand{\Ga}{\mathbb{G}_a}

\newenvironment{changemargin}[1]{%
  \begin{list}{}{%
    \setlength{\topsep}{0pt}%
    \setlength{\topmargin}{#1}%
    \setlength{\listparindent}{\parindent}%
    \setlength{\itemindent}{\parindent}%
    \setlength{\parsep}{\parskip}%
  }%
  \item[]}{\end{list}}

\parindent=0pt
\addtolength{\parskip}{0.5\baselineskip}

\subjclass[2010]{20G15}
\title{Levi decomposition of nilpotent centralisers in classical groups}

\author{Alex P. Babinski}
\address{Department of Mathematics, Statistics, and Computer Science, University of Wisconsin - Stout,
Menomonie, WI 54751, USA}
\email{babinskia@uwstout.edu}
\author{David I. Stewart}
\address{School of Mathematics and Statistics,
Herschel Building,
Newcastle,
NE1 7RU, UK}
\email{david.stewart@ncl.ac.uk}

\pagestyle{plain}
\begin{abstract}We check that the connected centralisers of nilpotent elements in the orthogonal and symplectic groups have Levi decompositions in even characteristic. This provides a justification for the identification of the isomorphism classes of the reductive quotients as stated in [Liebeck, Seitz; Unipotent and Nilpotent Classes in Simple Algebraic Groups and Lie Algebras]. \end{abstract}
\maketitle

\section{Introduction}

Let $G$ be a linear algebraic group over an arbitrary field $k$ with unipotent radical $U:=\RR_u(G)$. Then $U$ is by definition a subgroup of $G_{\bar k}$, where $G_{\bar k}$ is the base change of $G$ to the  algebraic closure $\bar k$ of $k$. In fact, the subgroup $U$ is defined to be the largest smooth, connected, unipotent normal subgroup of $G_{\bar k}$. We say $G$ has a \emph{Levi subgroup} $L$ if $G_{\bar k}=L_{\bar k}U$ and $L_{\bar k}\cap U=\{1\}$, scheme-theoretically; that is to say, that the following conditions hold:
\begin{align} &L_{\bar k}(\bar k)\cap U(\bar k)=\{1\}; \label{pts}\\
&\Lie(L_{\bar k})\cap \Lie(U)=0.\label{lie}\end{align}

The existence (or otherwise) of Levi subgroups is a central issue to address in understanding the subgroup structure of linear algebraic groups. When $k$ is a field of characteristic $0$, it is an old theorem of G.~D.~Mostow \cite{Mos56} that all linear algebraic groups have Levi subgroups. Essentially, the proof relies on Lie's theorem and exponentiation, both of which fail over fields of characteristic $p>0$. Indeed, algebraic groups need not have Levi subgroups over such fields. The points $G(W_2(k))$ of a reductive $k$-group $G$ over the length $2$ Witt vectors $W_2(k)$ furnish an example of such an algebraic group; see \cite[\S A.6]{CGP10} for a full account. (Also note that a minimal dimensional faithful representation for $G=\SL_2(W_2(k))$ is constructed in \cite{McN03}.) In this case one has a short exact sequence $1\to \g^{[1]}\to G(W_2(k))\to G\to 1$, where $\g=\Lie(G)$ and $\g^{[1]}$ is its first Frobenius twist as a $G$-module. Then the (unipotent) vector subgroup $\g^{[1]}\subseteq G(W_2(k))$ coincides with the unipotent radical of the latter. One can see that this sequence corresponds to an element of the rational (Hochschild) cohomology group $\opH^2(G,\g^{[1]})$ and indeed one has a suite of examples of $G$-modules $V$ where $\opH^2(G,V)\neq 0$ each giving rise to a non-split extension of $V$ by $G$ such that $V$ is the unipotent radical of the extension $E$ with no Levi factor. By contrast, if $G$ is a \emph{connected} linear algebraic group over $k$ with unipotent radical $U$ which is defined over $k$ then \cite[Thm.~B]{McN14} (see also \cite[Thm.~3.3.5]{SteUni}) shows that one can find a filtration of $U$ such that the sections have the structure of modules for $G/U$, and \cite{McN10} points out that the vanishing of the second Hochschild cohomology of these modules is enough to guarantee a Levi subgroup.

Certain interesting situations arise over an imperfect field $k$ since it is  possible that the unipotent radical $U$ may fail to be defined over $k$. This can happen in particular when one considers the case that $G$ is a pseudo-reductive group. The main result of the monograph \cite{CGP10} asserts that most \emph{pseudo-reductive} groups arise from Weil restriction of a reductive group across an inseparable extension of $k$. Moreover, if $G'$ is a reductive group that happens to be defined over $k$ and $k'/k$ is an inseparable extension, then the Weil restriction $\R_{k'/k}(G'_{k'})$ is a non-reductive linear algebraic group $G$ whose unipotent radical $U$ is not defined over $k$ but which contains a canonical copy of $G'$ as a Levi subgroup. For a general result on the existence of Levi subgroups in pseudo-reductive groups, see \cite[Thm.~3.4.6]{CGP10}.

In \cite[Prop.~5.10]{Jan04}, Jantzen shows, using arguments from \cite{Ric67} that when the characteristic $p$ of $k$ is good for $G$ the (smooth) centraliser $C_G(e)$ of a nilpotent elements $e\in\Lie(G)$ for $G$, a reductive group always has a Levi subgroup. In bad characteristic, this can apparently fail in the exceptional groups (see \cite[p283]{LS12}), though we have not found explicit examples. However, in this short note we wish to make the observation:

\begin{theorem*}\label{thethm}Let $G$ be a simple algebraic group of classical type over $k=\bar k$ of characteristic $2$ and $e\in\Lie(G)$ a nilpotent element. Then $C_G(e)^\circ_{\red}$ has a Levi decomposition.\end{theorem*}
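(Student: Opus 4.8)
The plan is to combine the explicit combinatorial description of nilpotent orbits in the classical groups with an associated cocharacter that gradates the whole situation. First I would dispose of type $A$: every prime is good for $\SL(V)$, so the result of Jantzen quoted above already supplies a Levi subgroup of $C_G(e)$, and passing to $C_G(e)^\circ_\red$ loses nothing. Since the property of admitting a Levi decomposition is insensitive to central isogeny and to the passage between $\O(V)$ and its identity component, I may then assume $G=\Sp(V)$ or $G=\SO(V)$, covering types $B$, $C$, $D$. Here I view $e$ as a nilpotent endomorphism of the natural module $V$ preserving the symplectic form or the quadratic form $Q$, and I decompose $V$ as a $k[e]$-module into the indecomposable orthogonal and symplectic summands catalogued by Liebeck--Seitz (those built from one or two Jordan blocks carrying a compatible form).

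The main tool would be an associated cocharacter $\lambda\colon\Gm\to G$ for $e$, chosen so that on each Jordan string $v,ev,\dots,e^{m-1}v$ the torus acts with the symmetric weights $1-m,3-m,\dots,m-1$, arranged compatibly with the form so that $\lambda$ genuinely lands in $G$ and $e\in\g(2)$. Such a $\lambda$ normalises $C_G(e)$ and gradates $V=\bigoplus_j V(j)$ and $\g=\bigoplus_j\g(j)$. I would then set $L:=\bigl(C_G(e)\cap C_G(\lambda)\bigr)^\circ_\red$ and let $U$ be the subgroup of $C_G(e)^\circ_\red$ directed by the strictly positive weight spaces of $\lambda$. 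The claim to establish is that $C_G(e)^\circ_\red=L\ltimes U$ is a Levi decomposition: that $U=\RR_u(C_G(e)^\circ_\red)$, that $L$ is reductive, and that conditions \eqref{pts} and \eqref{lie} hold.

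For the verification I would argue degree by degree. At the level of Lie algebras the grading gives $\Lie(C_G(e)^\circ_\red)=\bigoplus_{j\ge 0}\mathfrak{c}(j)$ with $\mathfrak{c}(j)=\Lie(C_G(e)^\circ_\red)\cap\g(j)$; the summand $\mathfrak{c}(0)=\Lie(L)$ and $\bigoplus_{j>0}\mathfrak{c}(j)=\Lie(U)$ meet only in $0$, which is exactly \eqref{lie}, while \eqref{pts} follows because an element of $L$ is $\lambda$-fixed and an element of $U$ is directed by strictly positive weights, so their intersection is trivial. Normality, connectedness and unipotence of $U$, together with reductivity of the quotient $L$, then identify $U$ as the unipotent radical. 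Finally $L$ factorises along the multiplicity spaces of the Jordan blocks of each size, and I would match it termwise with the product of symplectic, orthogonal and general linear factors recorded by Liebeck--Seitz, thereby simultaneously confirming their statement.

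The hard part will be the construction and correctness of $\lambda$ in characteristic $2$, where the interaction between the cocharacter grading and the quadratic form is delicate: the bilinear polarisation of $Q$ is alternating, so $\SO(V)\subseteq\Sp(V)$, and the orthogonal indecomposables (the modules $V(2m)$ and the block pairs $W(m)$ in the Liebeck--Seitz notation) carry forms whose radicals and defect spaces must be threaded correctly, both so that $\lambda\in G$ and so that the degree-zero part $L$ is genuinely reductive rather than merely containing a reductive subgroup. A second, related subtlety is the non-smoothness of the full centraliser scheme $C_G(e)$ in characteristic $2$: I must work throughout with $\Lie(C_G(e)^\circ_\red)$, the honest tangent algebra of the smooth group, and check that the $\lambda$-grading restricts to it cleanly. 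The potential gap between $\dim\Lie C_G(e)$ and $\dim C_G(e)^\circ_\red$ is precisely where the characteristic-$2$ phenomena concentrate, and it seems safest to resolve it by the explicit module-by-module computation rather than by a uniform argument.
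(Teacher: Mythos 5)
Your reduction via an associated cocharacter is sound as far as it goes: type $A$ is indeed covered by Jantzen's good-characteristic result, and the grading does show that the strictly positive-degree part of $C_G(e)^\circ_{\red}$ lies in the unipotent radical and meets the degree-zero part trivially. But the proof has a genuine gap at exactly the point you flag and then do not resolve. Your verification of (\ref{lie}) rests on the identification $\Lie(U)=\bigoplus_{j>0}\mathfrak{c}(j)$, which presupposes that the degree-zero part $L=(C_G(e)\cap C_G(\lambda))^\circ_{\red}$ is reductive; in characteristic $2$ it need not be. When it is not, one has $\RR_u(C_G(e)^\circ_{\red})=\RR_u(L)\ltimes U_{>0}$, so $\Lie(\RR_u)$ has a non-trivial degree-zero component and the grading argument is circular: it reduces the problem to the subgroup $C_G(\lambda)\cap C_G(e)$ but says nothing about it. This is precisely where the paper \emph{starts}: Liebeck--Seitz already produce a reductive $C$ inside $C_G(T,e)$ with $C_G(e)^\circ_{\red}=C\,\RR_u(C_G(e))$ at the level of points, and the entire content of the theorem is the remaining scheme-theoretic statement $\Lie(C)\cap\Lie(\RR_u(C_G(T,e)))=0$.

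That remaining statement is not a formal consequence of any grading, because in characteristic $2$ a reductive subgroup can satisfy (\ref{pts}) while failing (\ref{lie}): the example $\SO_{2n+1}\subset\Sp_{2n}\ltimes V$ in Section 3 of the paper does exactly this, the culprit being the infinitesimal normal subgroup scheme $\alpha_2^{2n}$ of $\SO_{2n+1}$, whose Lie algebra can sink into $\Lie(\RR_u)$ without any points doing so. The paper's proof therefore needs two ingredients absent from your proposal: Vasiu's theorem, which says that this $\SO_{2n+1}$ phenomenon is the \emph{only} possible obstruction for a reductive group; and, for each direct factor of $C$ of type $\SO_{2r+1}$ (these occur only when $G=\O(V)$), an explicit non-degenerate $(2a_i+2)$-dimensional subspace $Z_0$ of $V$ on which that factor acts faithfully while --- by Borel--Tits, since $\SO_{2a_i+1}$ lies in no parabolic of $\O_{2a_i+2}$ --- the unipotent radical $\RR_u(C_G(T,e))$ acts trivially. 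Your proposed module-by-module computation could in principle be pushed to supply the second ingredient, but without something like Vasiu's classification of normal unipotent subgroup schemes of reductive groups you have no control over which infinitesimal pieces of the degree-zero group might lie in the unipotent radical, and the argument as written does not close.
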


(The centralisers of nilpotent elements in bad characteristic need not be smooth; the group $C_G(e)^\circ_{\red}$ is the unique smooth group whose $k$-points are the same as that of $C_G(e)^\circ$, hence is the centraliser in the sense of \cite{Spr98}.) Most of our work is done by \cite{LS12}, which finds a subgroup $L$ of $C_G(e)^\circ$ satisfying (\ref{pts}) above. It remains to show that (\ref{lie}) holds. Chasing through the proof of \cite[Prop.~5.11]{LS12} and applying a result of Vasiu we show this is the case. 

Having established the existence of a subgroup $L$ satisfying (\ref{pts}), the authors of \cite{LS12} do not appear to have made an attempt to justify their statement in \cite[Thm.~5.6]{LS12} that there is an isomorphism $C_G(e)^\circ_{\red}/\RR_u(C_G(e)^\circ_{\red})\cong L$ as algebraic groups and indeed this map can fail to be an isomorphism of algebraic groups, precisely when (\ref{lie}) does not hold. Hence our theorem provides the missing justification.

\section{Proof of the theorem}
In this section $k$ will denote an algebraically closed field of characteristic $2$.

The following is a brief version of \cite[Thm.~5.6]{LS12}. As explained in the introduction, the proof in \emph{op.~cit.} only establishes the isomorphisms at the level of the abstract groups of points.
\begin{theorem}Let $e$ be a nilpotent element of $\Lie(G)$ where $G=\Sp(V)$ or $\O(V)$ and $V$ is a vector space over $k$. Then there are integers $m_i$ and $a_i$ such that:
\begin{enumerate} \item If $G=\Sp(V)$, then $C_G(e)^\circ_{\red}/\RR_u(C_G(e))\cong \prod_i\Sp_{2a_i}$.
\item If $G=\O(V)$ then $C_G(e)^\circ_{\red}/\RR_u(C_G(e))\cong \prod_{m_i}\Sp_{2a_i}\times \prod_{m_i}I_{a_i}$, where $I_{a_i}$ is either $\SO_{2a_i}$ or $\SO_{2a_i+1}$.\end{enumerate}
\end{theorem}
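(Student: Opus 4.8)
The plan is to compute the centraliser directly from the action of $e$ on $V$, viewing $V$ as a module over the truncated polynomial ring $k[e]\cong k[t]/(t^N)$ and tracking how the symplectic or quadratic form interacts with the Jordan block decomposition. First I would decompose $V$ as an orthogonal direct sum of indecomposable $e$-stable subspaces; over $k[t]/(t^N)$ the indecomposables are the Jordan blocks $J_m$, so $V\cong\bigoplus_m J_m^{\oplus r_m}$ as a $k[e]$-module, and the form refines this to an orthogonal sum of pieces each of which is either a single self-dual block or a hyperbolic pair $J_m\oplus J_m$. The integers $m_i$ in the statement are exactly the block sizes $m$ that occur, and the $a_i$ will be read off from the multiplicities $r_m$.

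Next I would identify $C_{\GL(V)}(e)=\Aut_{k[e]}(V)$ together with its Levi decomposition: the reductive part is $\prod_m\GL_{r_m}$, acting on the multiplicity spaces $k^{r_m}$, while the unipotent radical consists of the automorphisms acting as the identity on the associated graded of the block filtration, that is, those built from $\Hom_{k[e]}(J_m,J_{m'})$ with $m\neq m'$. Intersecting with $G=\Sp(V)$ or $\O(V)$ imposes preservation of the ambient form; I would check that this cuts out a Levi decomposition of $C_G(e)$ whose reductive part is the subgroup of $\prod_m\GL_{r_m}$ preserving a form induced on each multiplicity space $k^{r_m}$. The reductive quotient of $C_G(e)$ is thus a product, indexed by the block sizes $m$, of the isometry groups of these induced forms.

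The heart of the argument is then to determine the \emph{type} of the induced form on each multiplicity space, since this is what distinguishes the factors $\Sp_{2a_i}$, $\SO_{2a_i}$ and $\SO_{2a_i+1}$. Here the parity of the block size $m$ and the symplectic-versus-orthogonal nature of the ambient form interact: one computes the induced pairing by composing the ambient form with a suitable power of $e$ restricted to the top of each block, and the resulting form on the $r_m$-dimensional space is alternating in some cases and non-degenerate symmetric (with a possible one-dimensional radical, giving the odd orthogonal case) in others. Collecting the factors where the induced form is alternating yields the $\Sp_{2a_i}$ terms, and those where it is symmetric yield the $I_{a_i}\in\{\SO_{2a_i},\SO_{2a_i+1}\}$ terms; passing to $C_G(e)^\circ_{\red}$ accounts for the fact that only the connected group $\SO$, rather than $\O$, appears.

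I expect the main obstacle to be the characteristic-$2$ subtleties in this last step. In characteristic $2$ the distinction between symmetric and alternating bilinear forms degenerates, so one must work with the quadratic form $Q$ rather than only its polarisation, and the classification of indecomposable orthogonal $e$-modules is correspondingly more delicate than in odd characteristic (for instance an even Jordan block can itself carry a non-degenerate alternating form). Keeping careful track of $Q$ on the blocks, and of the induced quadratic data on the multiplicity spaces, is what forces the case split in part (ii) and accounts for the absence of any $\GL$ or odd-orthogonal factors in part (i). This bookkeeping is carried out in \cite[Ch.~6]{LS12}; the reduction above shows how their block data pins down the listed reductive quotients.
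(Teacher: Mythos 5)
There is a genuine gap, and it is precisely the point this paper was written to address. Your outline reproduces, in broad strokes, the Liebeck--Seitz computation that the paper simply cites (decompose $V$ into indecomposable $e$-stable pieces, find a reductive complement $C$ to $\RR_u(C_G(e))$ inside $C_G(T,e)$, and read off its type from the forms induced on multiplicity spaces). What that computation delivers, however, is a reductive subgroup $C\cong\prod\Sp_{2a_i}\times\prod I_{a_i}$ with $C_G(e)^\circ_{\red}=C\,\RR_u(C_G(e))$ and $C(k)\cap\RR_u(C_G(e))(k)=\{1\}$. Your final step silently upgrades this to the claimed isomorphism $C_G(e)^\circ_{\red}/\RR_u(C_G(e))\cong C$, and in characteristic $2$ that upgrade is not automatic: the restriction of the quotient map $q$ to $C$ is an isomorphism of algebraic groups only if in addition $\Lie(C)\cap\Lie(\RR_u(C_G(e)))=0$, and otherwise it is a purely inseparable isogeny. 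This is not a hypothetical worry for exactly the groups in your case split: by Vasiu's theorem, $\SO_{2a_i+1}$ in characteristic $2$ carries a normal infinitesimal unipotent subgroup scheme $\alpha_2^{2a_i}$ with $\SO_{2a_i+1}/\alpha_2^{2a_i}\cong\Sp_{2a_i}$, so if the Lie algebra intersection were nonzero the factor you label $I_{a_i}=\SO_{2a_i+1}$ would appear in the true reductive quotient as $\Sp_{2a_i}$ instead. The paper closes this gap by exhibiting, for each $\SO_{2a_i+1}$ factor, a nondegenerate $(2a_i+2)$-dimensional $C_G(T,e)$-submodule $Z_0\subseteq V$ on which that factor acts faithfully and with image in no parabolic of $\O_{2a_i+2}$, so that Borel--Tits forces $\RR_u(C_G(T,e))$ to act trivially on $Z_0$; this is the step your argument needs and does not contain.

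A secondary caveat: your description of the indecomposable pieces as Jordan blocks that are either self-dual or paired hyperbolically is the odd-characteristic picture. In characteristic $2$ the relevant classification (the modules $W(m_i)$ and $W_l(n)$ of \cite[\S 5.1]{LS12}) depends on the quadratic form and not merely its polarisation, and the criterion deciding whether $I_{a_i}$ is $\SO_{2a_i}$ or $\SO_{2a_i+1}$ comes from the interaction of a $W(m_i)^{a_i}$ summand with a $W_l(n)$ summand satisfying $2(n-l)\le m_i\le 2l-1$, not from a parity count on a single block size. Deferring this to \cite{LS12} is reasonable, but the bookkeeping you sketch would not by itself produce the correct case division in part (ii).
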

A technical condition related to the action of $e$ on $V$ determines the integers $a_i$ and $m_i$ and the condition by which one decides the isomorphism class of $I_{a_i}$. Then \cite[Prop.~5.11]{LS12} finds subgroups $C$ such that $C_G(e)_{\red}=C\RR_u(C_G(e))$.

To prove our theorem, we use \cite[Thm.~1.2]{Vas05}. Recall that for a field $k$ of characteristic $p$, $\alpha_p$ denotes the height $1$ group scheme whose representing Hopf algebra is $k[X]/(X^p)$, the comultiplication being determined by $\Delta(X)=1\otimes X+X\otimes 1$. (It is also the first Frobenius kernel of the smooth additive group $\Ga$.) For us, \emph{loc.~cit.} takes the form:

\begin{theorem}[Vasiu]\label{vasiu}Let $G$ be a reductive group over $k$. If $G$ has a non-trivial normal unipotent subgroup scheme then $\Char k=2$ and $G$ has a direct factor isomorphic to $\SO_{2n+1}$. Furthermore, if $G=\SO_{2n+1}$ then $U\cong \alpha_2^{2n}$ is the unique such; and $\Lie(U)$ is a $2n$-dimensional module for $\SO_{2n+1}$ of high weight $\varpi_1$.\end{theorem}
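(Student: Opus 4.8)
The plan is to reformulate the statement in terms of restricted Lie algebras and then to pin down the exceptional behaviour of type $B_n$ in characteristic $2$. Let $U$ be a non-trivial normal unipotent subgroup scheme of the reductive group $G$. First I would show $U$ is infinitesimal: the smooth connected group $(U_{\red})^\circ$ is a normal unipotent subgroup of $G$, hence trivial by reductivity, so $U_{\red}$ is finite étale and therefore central; as the centre of a reductive group is of multiplicative type it has no non-trivial unipotent subgroup scheme, forcing $U_{\red}=1$. Replacing $U$ by its intersection with the first Frobenius kernel $G_1$ (still non-trivial, normal, unipotent), I may assume $U$ has height $1$, so that by the standard correspondence between height-$1$ subgroup schemes and restricted subalgebras, $U$ is determined by $\n:=\Lie(U)$, a non-zero $G$-stable restricted ideal of $\g:=\Lie(G)$ all of whose elements are $[p]$-nilpotent. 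A routine argument using that distinct almost-simple factors commute and that the central torus acts trivially on $U$ reduces us to the case where $G$ is simple and $\n$ is a non-zero proper ideal of this kind.

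The heart of the matter---and the step I expect to be the main obstacle---is to show that such an $\n$ exists only for type $B_n$ with $p=2$. Being a $G$-submodule, $\n$ is graded by the weights of a maximal torus; its zero-weight part lies in the Cartan subalgebra and consists of toral (hence non-$[p]$-nilpotent) elements, so $\n$ is a sum of root spaces $\g_\alpha$. It can contain no long root space, for otherwise $G$-invariance together with closure under the bracket would place the non-zero toral element $h_\alpha=[\g_\alpha,\g_{-\alpha}]$ into $\n$. Hence $\n$ is the sum of the short root spaces, and the same closure argument forces $h_\alpha=0$ for every short root $\alpha$. Reading off the eigenvalues $\langle\beta,\alpha^\vee\rangle$ of $h_\alpha$ on the $e_\beta$, this vanishing means that all Cartan integers against the short coroot $\alpha^\vee$ are divisible by $p$. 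For a short root this happens precisely when $p=2$ and the root system is $B_n$, where $\alpha^\vee=2e_i$ makes every $\langle\beta,\alpha^\vee\rangle$ even; in types $C_n$ and $F_4$ at $p=2$ and $G_2$ at $p=3$ one points to a root $\beta$ with $\langle\beta,\alpha^\vee\rangle$ a unit modulo $p$, so $h_\alpha\neq0$ and no such ideal survives. I would make this case-check rigorous using the known descriptions of the $G$-module structure and the ideals of $\g=\Lie(G)$ in small characteristic, which moreover show that in type $B_n$, $p=2$ the only proper ideal of this kind is the irreducible $2n$-dimensional module $L(\varpi_1)$; the simple factor then has type $B_n$ and, as the construction below shows, is $\SO_{2n+1}$.

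I would then realise $U$ concretely when $G=\SO_{2n+1}$. In characteristic $2$ the polarisation $b(x,y)=Q(x+y)-Q(x)-Q(y)$ of the defining quadratic form $Q$ on the $(2n+1)$-dimensional space $V$ is alternating with one-dimensional radical $\langle z\rangle$, where $Q(z)\neq0$, and it descends to a symplectic form on $V/\langle z\rangle$. A direct computation shows that the kernel of the resulting map $\SO_{2n+1}\to\Sp_{2n}$ is the group of transvections $v\mapsto v+\lambda(v)z$ with $\lambda\in(V/\langle z\rangle)^\ast$: preservation of $Q$ gives $\lambda(v)^2Q(z)=0$, the composition law is $\lambda+\lambda'$, and each coordinate of $\lambda$ is an $\alpha_2$, so the kernel is $\cong\alpha_2^{2n}$. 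Its Lie algebra is the abelian ideal of nilpotent operators $v\mapsto\mu(v)z$, canonically $(V/\langle z\rangle)^\ast$, on which $\SO_{2n+1}$ acts through $\Sp_{2n}$ via the natural module; this is exactly the module $L(\varpi_1)$ identified above.

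Finally I would prove uniqueness. Any normal unipotent subgroup scheme $U$ is infinitesimal by the first paragraph, and its height-$1$ part $U_1$ corresponds to a non-zero $[p]$-nil $G$-stable ideal, which by the classification equals the irreducible module $L(\varpi_1)$; hence $U_1$ is the $\alpha_2^{2n}$ constructed above. It remains to rule out $U\supsetneq U_1$: then $U/U_1$ would be a non-trivial normal unipotent subgroup scheme of $G/U_1\cong\Sp_{2n}$, which is of type $C_n$ and so admits none by the classification. Therefore $U=U_1\cong\alpha_2^{2n}$ is the unique non-trivial normal unipotent subgroup scheme of $\SO_{2n+1}$, as required.
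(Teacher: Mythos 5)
You should note first that the paper does not prove this statement at all: it is quoted as a special case of \cite[Thm.~1.2]{Vas05} and used as a black box, so there is no internal proof to compare yours against. Your overall strategy --- show $U$ is infinitesimal, pass to the height-one part, translate into a non-zero $p$-nil $G$-stable restricted ideal $\mathfrak{n}\subseteq\mathfrak{g}$, classify such ideals via the root-space decomposition, and then realise the kernel of the very special isogeny $\SO_{2n+1}\to\Sp_{2n}$ explicitly --- is the standard route and is close in spirit to Vasiu's own argument; the reductions in your first paragraph and the uniqueness argument in your last paragraph are fine.

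Two steps in the middle carry real content that your sketch does not yet supply. First, ``$h_\alpha=0$'' is not equivalent to ``every Cartan integer $\langle\beta,\alpha^\vee\rangle$ is divisible by $p$'': one has $h_\alpha=0$ in $\Lie(T)=X_*(T)\otimes k$ precisely when $\alpha^\vee\in pX_*(T)$, a condition on the cocharacter lattice and hence on the isogeny type, not just on the root system. This is exactly what separates $\SO_{2n+1}$ from $\mathrm{Spin}_{2n+1}$: for a short root $\alpha=e_i$ of $B_n$ with $p=2$, the coroot $\alpha^\vee=2e_i$ lies in $2X_*(T)$ for the adjoint group but not for the simply connected one, so $\mathrm{Spin}_{2n+1}$ admits no such ideal (its $h_{e_i}$ is a non-zero central element) while $\SO_{2n+1}$ does. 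Checking Cartan integers alone therefore cannot deliver the conclusion that the direct factor is $\SO_{2n+1}$ rather than another form of type $B_n$; the simply-laced adjoint groups (e.g.\ $\PGL_2=\SO_3$ in characteristic $2$, where $h_\alpha=0$) also escape your long/short dichotomy and must be folded in explicitly. Your appeal to the known classification of ideals of $\mathfrak{g}$ (Hogeweij, Hiss) is legitimate, but that is where the burden of the case-check actually sits. Second, in the explicit construction the scheme-theoretic stabiliser of $Q$ on a $(2n+1)$-dimensional space in characteristic $2$ is not smooth, and the kernel of its map to $\Sp_{2n}$ consists of all maps $v\mapsto v+\lambda(v)z$ with $\lambda(v)^2=0$ for all $v$, including those with $\lambda(z)\neq 0$; as a scheme this is $\alpha_2^{2n+1}$, not $\alpha_2^{2n}$. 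The condition $\lambda(z)=0$ that you impose by writing $\lambda\in(V/\langle z\rangle)^*$ is exactly the condition of lying in the smooth subgroup $\SO_{2n+1}$, and your ``direct computation'' needs to say why it holds there. Neither issue is fatal, but both must be closed before the argument establishes the statement as given.
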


\begin{remark}In the theorem above, the $2n$-dimensional module $L(\varpi_1)$ is obtained as a quotient of the `natural' Weyl module $V(\varpi_1)$ by the radical of its form; see \cite[II.8.21]{Jan03} for a brief discussion.\end{remark}

As is rather well-known (see \cite[2.1]{Vas05}) we have that $\SO_{2n+1}/U\cong\Sp_{2n}$, where $U\cong \alpha_2^{2n}$ is its infinitesimal unipotent normal subgroup. The following is now immediate from the theorem and the fact that $L(\varpi_1)$ is irreducible.

\begin{cor}\label{vascor} Let $G$ be a linear algebraic group over $k$ admitting a reductive subgroup $C$ such that $G=C\RR_u(G)$. Then either the quotient map $q:G\to G/\RR_u(G)$ restricts to an isomorphism on $C$ or $C$ contains a direct factor $H$ isomorphic to $\SO_{2n+1}$ and the image of $H$ under $q$ is isomorphic to $\Sp_{2n}$.\end{cor}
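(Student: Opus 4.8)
The plan is to study the restriction of $q$ to $C$ by way of its kernel. Since $G=C\,\RR_u(G)$, the homomorphism $q|_C\colon C\to G/\RR_u(G)$ is surjective, and its scheme-theoretic kernel is $N:=C\cap\RR_u(G)$, a normal subgroup scheme of $C$. As $N$ lies in the unipotent group $\RR_u(G)$, it is a normal \emph{unipotent} subgroup scheme of the reductive group $C$, and the whole statement becomes a question about $N$.

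First I would dispose of the case $N=\{1\}$. Here $q|_C$ is a surjective homomorphism of algebraic groups, hence faithfully flat, with trivial kernel, so it is an isomorphism onto $G/\RR_u(G)$. This is the first alternative.

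Suppose then that $N\neq\{1\}$. Now Theorem~\ref{vasiu} applies to $C$ (recall $\Char k=2$): the group $C$ has a direct factor $H\cong\SO_{2n+1}$, say $C=H\times C'$. It remains to identify $\ker(q|_H)=H\cap N$ with the infinitesimal subgroup $U\cong\alpha_2^{2n}$ of $H$, for then $q(H)=H/U\cong\Sp_{2n}$ by the remark preceding the corollary. Now $H\cap N$ is a normal unipotent subgroup scheme of $H$, so $\Lie(H\cap N)$ is an $H$-submodule of $\Lie(U)=L(\varpi_1)$. Since $L(\varpi_1)$ is irreducible, $\Lie(H\cap N)$ is either $0$ or all of $\Lie(U)$; and because $U$ is of height one, $\Lie(H\cap N)=\Lie(U)$ forces $H\cap N=U$, whereas $\Lie(H\cap N)=0$ forces $H\cap N=\{1\}$.

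The one point that needs care — and which I expect to be the main obstacle — is to rule out $H\cap N=\{1\}$, that is, to guarantee that a genuine unipotent part of $N$ is carried by an $\SO_{2n+1}$ factor rather than spread inertly through $C$. I would argue on the level of Lie algebras. First, $N$ is infinitesimal: a reductive group has no non-trivial central (hence no non-trivial étale normal) unipotent subgroup scheme, its centre being of multiplicative type, so the étale quotient $\pi_0(N)$ is trivial and $\Lie(N)\neq 0$. Next, $\Lie(N)$ is a non-zero $C$-submodule of the adjoint module $\Lie(C)$, and by Vasiu's theorem the only simple factors of $C$ admitting a non-trivial normal unipotent subgroup scheme are the $\SO_{2n+1}$'s, whose unipotent normal directions are exactly the irreducible $L(\varpi_1)$. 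Decomposing $\Lie(N)$ accordingly, it must meet $\Lie(U)$ non-trivially for at least one such factor $H$; for that $H$ the previous paragraph gives $H\cap N=U$, and hence $q(H)\cong\Sp_{2n}$, completing the argument.
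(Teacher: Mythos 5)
Your reduction to the normal unipotent subgroup scheme $N=C\cap\RR_u(G)=\ker(q|_C)$, the disposal of the case $N=\{1\}$, and the appeal to Vasiu's theorem together with the irreducibility of $L(\varpi_1)$ are exactly the argument the paper has in mind (it offers nothing beyond the sentence that the corollary ``is now immediate from the theorem and the fact that $L(\varpi_1)$ is irreducible''). The first alternative and the verification that $N$ is infinitesimal with $\Lie(N)\neq 0$ are fine.

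The one step that does not hold up as written is the closing inference: ``$\Lie(N)$ is a non-zero $C$-submodule of the adjoint module $\Lie(C)$ \dots\ decomposing $\Lie(N)$ accordingly, it must meet $\Lie(U)$ non-trivially for at least one such factor $H$.'' As a statement about arbitrary non-zero $C$-submodules of $\Lie(C)$ this is false: in characteristic $2$ the adjoint module typically contains trivial submodules disjoint from every $\Lie(U_H)$ --- for instance the scalar matrices in $\sp_{2m}$, which form the Lie algebra of the central subgroup scheme $\mu_2\subset \Sp_{2m}$ and are, at the level of $C$-modules, indistinguishable from the Lie algebra of a normal $\alpha_2$. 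So module theory alone cannot place $\Lie(N)$ inside $\bigoplus_H\Lie(U_H)$; you must first pin $N$ down as a subgroup scheme. The repair is short. Write $C=H_1\times\dots\times H_r\times C'$, where the $H_i$ are the direct factors isomorphic to groups $\SO_{2n_i+1}$. The image of $N$ under projection to each factor is a normal unipotent subgroup scheme of that factor, so by Theorem \ref{vasiu} (the uniqueness clause for each $H_i$, and the absence of odd orthogonal direct factors of $C'$) it is trivial on $C'$ and contained in $U_{H_i}$ on $H_i$; hence $N\subseteq U_{H_1}\times\dots\times U_{H_r}$. Now $\Lie(N)$ is a non-zero submodule of $\bigoplus_i L(\varpi_1)$, a direct sum of pairwise non-isomorphic irreducibles, so $\Lie(N)\supseteq \Lie(U_{H_j})$ for some $j$; since these group schemes have height one, $U_{H_j}\subseteq N$, and then $H_j\cap N=U_{H_j}$ by uniqueness, giving $q(H_j)\cong \Sp_{2n_j}$ as required.
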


\begin{proof}[Proof of Theorem]
In \cite[Prop.~5.11]{LS12} a subgroup $C\subseteq C_G(e)$ is constructed such that $C_G(e)^\circ_{\red}=C\RR_u(C_G(e))$. One finds that $C$ contains direct factors of type $\SO_{2n+1}$ only if $G$ is $\O(V)$ for some $V$, hence Corollary \ref{vascor} implies $\Lie(C)\cap\Lie(\RR_u(G))$ is trivial when $G=\Sp_{2n}$. 

Hence we assume $G$ is $O(V)$ and $C$ contains a direct factor isomorphic to $\SO_{2r+1}$. The proof of \cite[Prop.~5.11]{LS12} proceeds by finding an orthonormal basis for $V$ and describing explicitly the action of $e$ on $V$. One finds that the action of $e$ on $V$ is constructed as a direct sum of non-isomorphic indecomposable $ke$-modules which are labelled $W(m_i)$ and $W_l(n)$; a basis of these modules and explicit action of $e$ is given in \cite[\S5.1]{LS12}. The multiplicity of the module $W(m_i)$ is labelled $a_i$, thus $W(m_i)^{a_i}$ appears as a direct $ke$-summand of $V$. Furthermore, a certain $1$-dimensional torus $T\subset G$ associated to $e$ is constructed which stabilises each of the indecomposable $ke$-modules above. Then $C$ is constructed as a subgroup of $C_G(T,e)=C_G(T)\cap C_G(e)$. It turns out that the non-zero weight spaces of $T$ on $C_G(e)$ are all of positive weight; denoting the corresponding subgroup by $C_G(e)_{>0}$ we have  $C_G(e)_{>0}\subseteq \RR_u(C_G(e))$. Thus it suffices to show that $\RR_u(C_G(T,e))\cap C=\{1\}$, scheme-theoretically.

We proceed by identifying, for each direct factor $H$ of type $\SO_{2r+1}$ in $C_G(e)$, a $C_G(T,e)$-submodule of $V$ on which $H$ acts faithfully and on which $\RR_u(C_G(T,e))$ acts trivially. This is enough to prove the theorem.

Since $C$ contains a direct factor isomorphic to $\SO_{2r+1}$ we have from \cite[Lem.~5.10]{LS12} that $V$ contains a summand of the form $W_l(n)$ with $2(n-l)\leq m_i\leq 2l-1$. Then following the proof of \emph{loc.~cit.} we obtain an action of $\SO_{2a_i+1}$ on the zero weight space $Z_0$ of the module $Z:=W(m_i)^{(a_i)}\perp W_l(n)$. Given the explicit description of the modules $W(m_i)$ and $W_l(n)$ from \cite[\S5.1]{LS12}, we have that $Z_0$ is non-degenerate of dimension $2a_i+2$. Then the proof of \cite[Lem.~5.10]{LS12} describes $\SO_{2a_i+1}$ as acting on $Z_0$ as the indecomposable module with successive factors being the trivial module $k$, $L(\varpi_1)$ and $k$ again (or $k$, $L(2\varpi_1)$, $k$ if $Y\cong \SO_3=\PGL_2$). Since the natural module for $\SO_{2a_i+1}$ is isomorphic to the unique codimension $1$-submodule of $Z_0$, we have that $\SO_{2a+1}$ acts faithfully on this module. As is well-known, $\SO_{2a_i+1}$ is contained in no parabolic subgroup of $\O_{2a_i+2}$. Hence by the Borel--Tits theorem, the image of $C_G(T,e)$ in $\O_{2a_i+2}$ must be reductive. Thus its unipotent radical $\RR_u(C_G(T,e))$ acts trivially on the faithful $\SO_{2a_i+1}$-module $Z_0$ as required.
\end{proof}

\section{A question}

It is possible for a reductive subgroup $L$ of an algebraic group $G=LU$ to satisfy (\ref{pts}) but not (\ref{lie}). This occurs specifically when $L=\SO_{2n+1}\subset G:=\Sp_{2n}\ltimes V$ where $V$ is the natural module for $\Sp_{2n}$. Nevertheless, $G$ evidently does have a Levi subgroup. In light of this, we raise the following question.

\begin{question}Suppose $G$ is an algebraic group over $k=\bar k$ with unipotent radical $U$, and $L$ is a subgroup which satisfies $G(k)=L(k)U(k)$. Must $G$ have a Levi factor $L'$ such that $G=L'\ltimes U$?\end{question}

{\footnotesize
\bibliographystyle{amsalpha}
\bibliography{bib}}

\end{document}